\DeclareMathOperator{\Pic}{Pic}
\DeclareMathOperator{\Tor}{Tor}
\DeclareMathOperator{\Sym}{Sym}
\DeclareMathOperator{\B}{{B}}
\DeclareMathOperator{\N}{\mathbb{N}}
\def\A{{A}}
\def\E{{E}}
\def\F{\mathcal{F}}
\def\L{{L}}
\def\M{{M}}
\def\N{\mathbb{N}}
\def\O{\mathcal{O}}
\def\P{\mathbb{P}}
\def\R{\mathbb{R}}
\def\Z{\mathbb{Z}}
\def\u{{\bf u}}
\def\v{\bf{v}}
\def\w{\bf{w}}
\def\<{\langle}
\def\>{\rangle}
\def\({\left(}
\def\){\right)}
\theoremstyle{plain}
\newtheorem{theorem}{Theorem}
\newtheorem{corollary}[theorem]{Corollary}
\newtheorem{lemma}[theorem]{Lemma}
\theoremstyle{definition}
\newtheorem{definition}[theorem]{Definition}
\newtheorem{example}[theorem]{Example}
\newtheorem{remark}[theorem]{Remark}
\begin{document}
\bibliographystyle{alpha}

\date{\today}
\title{Multigraded regularity and the Koszul property}
\author[M.~Hering]{Milena Hering}
\address{Institute of Mathematics and its Applications,
University of Minnesota, 400 Lind Hall, Minneapolis, MN 55455, USA}
\email{{\tt hering@ima.umn.edu}}

\begin{abstract}
  We give a criterion for the section ring of an ample line bundle 
  to be Koszul in terms of multigraded regularity. We discuss an application 
  to polytopal semigroup rings.  
\end{abstract}
\maketitle

\section{Introduction}

Let $\A$ be an ample line bundle 
on a projective variety $X$ over a field $k$, and let 
$R\(\A\)=\bigoplus_{\ell\geq 0}H^0\(X,\A^{\ell}\)$ be the section ring 
associated to $\A$. 
Recall that a graded $k$ algebra $R=k\oplus R_1\oplus R_2 \oplus \cdots$ 
is called \emph{Koszul} (or \emph{wonderful})
if $k$ admits a linear free resolution over $R$. 
It is well known that a Koszul algebra is generated in degree 1, and 
that the ideal of relations between its generators is generated by quadrics.

The purpose of this note is to give criteria for the section ring of 
an ample line bundle to be Koszul in terms of the regularity of the 
line bundle.  The following theorem illustrates the flavour of our 
main result, Theorem \ref{thm:multigraded}. 

\begin{theorem}\label{thm:easy}
  Let $\A$ be an ample line bundle on a projective variety $X$ over an 
  infinite field $k$. Assume that 
  $H^i(X,A^{m-i}) = 0$ for $i>0$. Then the section ring 
  $R(\A^m) = \bigoplus _{\ell\geq 0} H^0(X,\A^{\ell m})$ is Koszul. 
\end{theorem}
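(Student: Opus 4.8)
The plan is to deduce Koszulness of $R = R(\A^m)$ from a single cohomological vanishing for the kernel bundle of $B := \A^m$. Write $V = H^0(X,\A^m) = R_1$ and let $\M_B$ be the kernel of the evaluation map, so that
\[ 0 \to \M_B \to V \otimes \O_X \to B \to 0. \]
The standard syzygy-bundle translation of the bar resolution of $k$ over $R$ reduces the assertion that $R$ is Koszul to the vanishing
\[ H^1(X, \M_B^{\otimes p} \otimes B^q) = 0 \qquad \text{for all } p \geq 1,\ q \geq 1, \]
the case $p=1$ giving generation in degree $1$, the case $p=2$ the quadratic relations, and the higher cases the linearity of the whole resolution of $k$. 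I want to stress at the outset that Koszulness of the algebra forces control of the \emph{tensor} powers $\M_B^{\otimes p}$, not merely of the exterior powers $\wedge^p\M_B$ that govern the resolution of $R$ over $\Sym V$ (the $N_p$ conditions); this is the genuinely harder input and dictates the shape of the argument.

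First I would reinterpret the hypothesis $H^i(X,\A^{m-i})=0$ for $i>0$ as the statement that $\O_X$ is $m$-regular with respect to $\A$ in the sense of Castelnuovo--Mumford. I would then set up an induction on $p$: tensoring the evaluation sequence above by $\M_B^{\otimes(p-1)}\otimes B^q$ gives
\[ 0 \to \M_B^{\otimes p}\otimes B^q \to V \otimes \M_B^{\otimes(p-1)}\otimes B^q \to \M_B^{\otimes(p-1)}\otimes B^{q+1} \to 0, \]
and its long exact sequence shows that $H^1(\M_B^{\otimes p}\otimes B^q)$ vanishes provided $H^1(\M_B^{\otimes(p-1)}\otimes B^q)=0$ (the inductive hypothesis) and the multiplication map $V\otimes H^0(\M_B^{\otimes(p-1)}\otimes B^q)\to H^0(\M_B^{\otimes(p-1)}\otimes B^{q+1})$ is surjective. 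The base case $p=1$ is the normal generation of $B$, a standard consequence of $m$-regularity. To even form $\M_B$ I also need $B=\A^m$ globally generated; this, like the surjectivity statements, is exactly the kind of fact Mumford's regularity induction produces, and it is here that the infinite field enters, supplying the general hyperplane sections of $X$ that drive that induction.

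The main obstacle is precisely the surjectivity of the twisted multiplication maps above, equivalently the regularity of the tensor powers $\M_B^{\otimes p}$, which carry no filtration by twists of a single line bundle and so lie outside Mumford's single-sheaf formalism. This is where the multigraded viewpoint of Theorem~\ref{thm:multigraded} is indispensable. I would realise $\M_B^{\otimes p}$ as the restriction to the small diagonal of the external tensor product of $p$ copies of $\M_B$ on the product $X^{\times p}$, and interpret the sought vanishing as a statement about the multigraded regularity of a structure sheaf on this product with respect to the $p$ external copies of $\A$. The force of the multigraded machinery is that the single-graded hypothesis that $\O_X$ is $m$-regular for $\A$ propagates to multigraded regularity of these product sheaves, and hence to the simultaneous vanishing and surjectivity statements in every multidegree at once. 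Granting Theorem~\ref{thm:multigraded}, the vanishing $H^1(\M_B^{\otimes p}\otimes B^q)=0$ holds for all $p,q\geq 1$, and therefore $R(\A^m)$ is Koszul. It is worth recording that the result is a sheaf-theoretic analogue of the Eisenbud--Reeves--Totaro theorem that sufficiently high Veronese subalgebras are Koszul, with the cohomological condition on $\A^{m-i}$ playing the role of the quantitative bound on $m$; read this way, the appearance of an infinite base field is unsurprising.
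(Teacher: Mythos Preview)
Your reduction is not the one the paper uses, and this is where the proposal goes astray. The ``standard syzygy-bundle translation'' of Koszulness that the paper invokes (Lazarsfeld's criterion, stated as Lemma~\ref{lem:Koszul}) does \emph{not} involve the tensor powers $\M_B^{\otimes p}$; it involves the \emph{iterated} kernel bundles $\M^{(h)}$, defined by $\M^{(0)}=\A^m$ and $\M^{(h)}=\M_{\M^{(h-1)}}\otimes \A^m$. These are genuinely different objects: already $\M^{(2)}$ is built from the evaluation sequence of $\M^{(1)}=\M_B\otimes B$, not by tensoring two copies of $\M_B$. Your asserted criterion via $H^1(\M_B^{\otimes p}\otimes B^q)=0$ is thus not what the bar resolution of $k$ over $R$ actually produces, and you give no argument linking tensor-power vanishing to the linearity of that resolution; the distinction you draw between $\M_B^{\otimes p}$ and $\wedge^p\M_B$ is real, but neither of these is the correct family here.

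Equally important, you misread what Theorem~\ref{thm:multigraded} provides. That theorem concerns several globally generated line bundles $B_1,\ldots,B_r$ on a \emph{single} variety $X$, and Theorem~\ref{thm:easy} is literally its $r=1$ specialisation: take $B_1=\A$ and take the ample bundle of Theorem~\ref{thm:multigraded} to be $\A^m$; the hypothesis $H^i(X,\A^{m-i})=0$ is precisely $\O_X$-regularity of $\A^m$ with respect to $\A$. There is no passage to $X^{\times p}$, no external tensor product, and no multigraded regularity on a product. The actual proof stays on $X$ throughout: one shows by induction on $h$ that each $\M^{(h)}$ is $\O_X$-regular with respect to $\A$, using the short exact sequence $0\to \M^{(h)}\to H^0(\M^{(h-1)})\otimes \A^m\to \M^{(h-1)}\otimes \A^m\to 0$ together with Theorem~\ref{thm:Mumford} and Lemma~\ref{lem:ses}; then Lemma~\ref{lem:Koszul} gives Koszulness. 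Your inductive scaffolding has the right shape, but it is aimed at the wrong family of bundles and invokes the multigraded theorem in a way that theorem does not support.
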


It is well known that section rings of high enough powers of ample line bundles 
are Koszul (see \cite{Backelin86}). 
Moreover, Eisenbud and Reeves \cite{EisenbudReevesTotaro} 
give criteria for 
Veronese subalgebras of 
graded $k$-algebras to be Koszul in terms of the 
algebraic Castelnuovo-Mumford regularity. We illustrate 
the relationship between our theorem and these criteria in 
the end of Section \ref{sec:proof}. 
  
Sufficient criteria for powers of ample line bundles 
to have Koszul section ring  are known for 
curves \cite{VishikFinkelberg93,Butler94, Polishchuk95,PareschiPurnaprajna97,
ConcaRossiValla}, homogeneous spaces 
\cite{InamdarMehta94,Bezrukavnikov95,Ravi95}, 
elliptic ruled surfaces \cite{GallegoPurnaprajna96a}, abelian varieties 
\cite{Kempf89}, and toric varieties
\cite{BrunsGubeladzeTrung97}. 
More generally,  
there are criteria for certain
 adjoint line bundles on smooth projective varieties 
to have Koszul section ring, see 
 \cite{Pareschi93}.  

The Koszul property has also been studied
for 
points in projective spaces, see \cite{ConcaTrungValla,Kempf92,Polishchuk06}, 
and toric varieties admitting additional combinatorial 
structure, see \cite{Sturmfels96, 
PeevaReinerSturmfels, HerzogHibiRestuccia, OhsugiHibi99K}.
The Koszul property appears naturally in many areas 
of mathematics; see for example \cite{PolishchukPositselski} 
for an introduction to Koszul algebras from different perspectives.

We will prove a more general version of Theorem \ref{thm:easy} 
in terms of multigraded regularity 
(compare \cite{MaclaganSmith04} and \cite{HeringSchenckSmith}), see Theorem 
\ref{thm:multigraded}. It generalizes a result for line bundles 
on surfaces by Gallego and Purnaprajna 
\cite[Theorem 5.4]{GallegoPurnaprajna96} that they use to give exact 
criteria for line bundles on elliptic ruled surfaces to have a Koszul 
section ring. 
The proof is based on a vanishing theorem due to 
Lazarsfeld and uses methods very similar to those 
in \cite{GallegoPurnaprajna96} and \cite{HeringSchenckSmith}.

As an application, we show how the criteria for polytopal semigroup 
rings to be Koszul due to Bruns, Gubeladze and 
Trung \cite[Theorem 1.3.3.]{BrunsGubeladzeTrung97} can be 
improved if multiples of the polytope do not contain 
interior lattice points, see Section \ref{sec:polytopes}.  
Theorem \ref{thm:multigraded} is part of my thesis \cite{Hering05}.

\subsection*{Acknowledgements}
  I would like to thank W.~Fulton, R.~Lazarsfeld, E.~Miller,
 M.~ Musta{\c t}a, 
 H.~Schenck, and G.~Smith. Furthermore I would like to thank the 
Institute of Mathematics and its Applications and the Max Planck 
Institute f{\"u}r Mathematik, Bonn for their hospitality.

\section{Multigraded regularity and proof of theorem
}\label{sec:proof}

Let $X$ be a projective variety over a field $k$.
We will assume for the remainder of the paper that $k$ is infinite. 
Let $\B_1, \ldots, \B_r$ be globally generated line bundles 
on $X$.
For $\u\in \Z^r$, we let 
$\B^{\u}:= \B_1^{u_1}\otimes \cdots
\otimes \B_r^{u_r}$ and $|\u|=u_1 +\cdots +u_r$. 
Let  $\mathcal{B} = \{ \B^{\u} \mid \u\in \N^r\} \subset \Pic(X)$ be the 
submonoid of $\Pic(X)$ 
generated by $\B_1, \ldots, \B_r$. 

\begin{definition}\label{def:regularity} 
Let $\L$ be a line bundle on $X$. 
A sheaf $\F$ is called
$\L$-regular with respect to 
$\B_1, \ldots, \B_r$ if 
\begin{equation*}
  H^i\left(X, \F\otimes \L\otimes\B^{-\u}\right) =0
\end{equation*}
for all $i>0$ and for all $\u\in \N^r$ with $|\u| 
= i$.
\end{definition}

Obeserve that for $r=1$, this is the usual definition 
for Castelnuovo-Mumford regularity, compare 
\cite[1.8.4.]{Lazarsfeld04a}. 

Now we are ready to state the main theorem. 

\begin{theorem}\label{thm:multigraded}
Let $\B_1, \ldots , \B_r$ be a set of globally generated 
line bundles on $X$ generating a semigroup $\mathcal{B}$.
Let $\A\in \mathcal{B}$ be an ample line bundle such that 
$\A\otimes \B_i^{-1} \in {{\mathcal{B}}}$ for all $i=1,\ldots,r$.
If $\A$ is $\O_X$-regular with respect to $\B_1, \ldots, \B_r$,   
then 
the section ring  
 $R(\A)=\bigoplus_{\ell\geq 0} H^0(X,\A^{\ell})$ is Koszul. 
\end{theorem}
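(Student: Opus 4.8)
The plan is to translate the Koszul property into a cohomological vanishing statement and then feed that statement the $\O_X$-regularity hypothesis. For a globally generated line bundle $\L$ write $M_{\L}$ for the associated kernel bundle, defined by the evaluation sequence
$$0\longrightarrow M_{\L}\longrightarrow H^0(X,\L)\otimes\O_X\longrightarrow\L\longrightarrow 0.$$
First I would recall the criterion, in the style of Gallego and Purnaprajna \cite[Theorem 5.4]{GallegoPurnaprajna96}, that once $R(\A)$ is generated in degree one it is Koszul as soon as the multiplication maps
$$H^0(X,\A)\otimes H^0\bigl(X, M_{\A}^{\otimes p}\otimes\A^{\ell}\bigr)\longrightarrow H^0\bigl(X, M_{\A}^{\otimes p}\otimes\A^{\ell+1}\bigr)$$
are surjective for all $p\geq 0$ and $\ell\geq 1$. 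Tensoring the evaluation sequence for $\A$ with $M_{\A}^{\otimes p}\otimes\A^{\ell}$ and passing to the long exact cohomology sequence, this surjectivity is implied by the vanishing of $H^1\bigl(X, M_{\A}^{\otimes (p+1)}\otimes\A^{\ell}\bigr)$. Thus the theorem reduces to proving
$$H^1\bigl(X, M_{\A}^{\otimes p}\otimes\A^{\ell}\bigr)=0\qquad\text{for all }p\geq 1\text{ and }\ell\geq 1,$$
the case $p=0$ of the surjectivity (i.e. the vanishing for $p=1$) being exactly generation in degree one.

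Next I would extract the consequences of the hypothesis. Since $\A\in\mathcal{B}$ we may write $\A=\B^{\u}$, and then the multigraded Mumford-type theorem of Maclagan and Smith \cite{MaclaganSmith04} shows that $\O_X$-regularity of $\A$ propagates: each $\A^{\ell}=\A\otimes\B^{(\ell-1)\u}$ is again $\O_X$-regular with respect to $\B_1,\ldots,\B_r$. Consequently $H^i(X,\A^{\ell})=0$ for all $i>0$ and $\ell\geq 1$, and the multiplication maps $H^0(X,\A^{\ell})\otimes H^0(X,\A)\to H^0(X,\A^{\ell+1})$ are surjective. This settles the case $p=0$ and provides the base of an induction on $p$.

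The core of the argument is the vanishing for $p\geq 2$. Here I would invoke the vanishing theorem of Lazarsfeld \cite{Lazarsfeld04a} for tensor products of kernel bundles, exploiting the conditions $\A\otimes\B_i^{-1}\in\mathcal{B}$: writing $\A=\B_i\otimes(\A\otimes\B_i^{-1})$ lets me relate $M_{\A}$ to the bundles $M_{\B_1},\ldots,M_{\B_r}$ through the standard short exact sequences attached to such a factorization, so that $M_{\A}^{\otimes p}\otimes\A^{\ell}$ can be analysed in terms of the $M_{\B_i}$. Lazarsfeld's theorem then bounds the cohomology of these kernel-bundle tensor products against the groups $H^i\bigl(X,\A^{\ell}\otimes\B^{-\u}\bigr)$ with $|\u|=i$, which are precisely the groups forced to vanish by the $\O_X$-regularity of $\A^{\ell}$ in the sense of Definition \ref{def:regularity}. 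Combined with the surjectivity of the multiplication maps from the previous step, this yields the vanishing for all $p\geq 1$ and $\ell\geq 1$, and hence the Koszul property; the line of reasoning parallels \cite{HeringSchenckSmith}.

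The step I expect to be the main obstacle is the bookkeeping in this last reduction. At each stage of the induction on $p$ one must match the cohomology groups produced by peeling the kernel bundles $M_{\B_i}$ off $M_{\A}^{\otimes p}\otimes\A^{\ell}$ against the rigid pattern ``$H^i$ with a twist by $\B^{-\u}$ where $|\u|=i$'' demanded by the regularity hypothesis. The hypotheses $\A\in\mathcal{B}$ and $\A\otimes\B_i^{-1}\in\mathcal{B}$ are exactly what make the relevant factorizations available and keep every twist that arises inside the range where regularity has been verified. The delicate point is that the degree-zero contributions appearing in the long exact sequences cannot be killed by a naive vanishing and must instead be absorbed by the surjectivity of the multiplication maps; for this reason the surjectivity and the vanishing statements have to be carried through together in a single induction rather than established separately.
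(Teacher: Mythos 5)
Your overall architecture --- reduce Koszulness to an $H^1$-vanishing for kernel bundles twisted by powers of $\A$, then deduce that vanishing from multigraded regularity --- resembles the paper's, but you have chosen the wrong family of bundles, and this is fatal. You work with the tensor powers $\M_{\A}^{\otimes p}$, and your first reduction turns the theorem into the claim that $H^1(X,\M_{\A}^{\otimes p}\otimes \A^{\ell})=0$ for all $p\geq 1$, $\ell\geq 1$. That vanishing (equivalently, the surjectivity of all the multiplication maps you list) is not a criterion for Koszulness of $R(\A)$: it is the standard criterion for $\A$ to satisfy property $N_p$ for \emph{every} $p$, i.e.\ for the resolution of $R(\A)$ over $\Sym^{\bullet}H^0(X,\A)$ to be $2$-linear. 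That is a strictly stronger condition which simply fails under the hypotheses of the theorem. Concretely, let $X$ be an elliptic curve, $\B_1$ a line bundle of degree $2$, and $\A=\B_1^2$ of degree $4$: then $\B_1$ is globally generated, $H^1(X,\A\otimes\B_1^{-1})=H^1(X,\B_1)=0$, and $\A\otimes\B_1^{-1}=\B_1\in\mathcal{B}$, so the theorem applies (and $R(\A)$, the coordinate ring of a complete intersection of two quadrics in $\P^3$, is indeed Koszul); but the minimal resolution $0\to S(-4)\to S(-2)^2\to S\to R(\A)\to 0$ is not $2$-linear, so $N_2$ fails and your target vanishing is false for some $p,\ell$. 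No amount of bookkeeping with the bundles $\M_{\B_i}$ in your third paragraph can rescue a false intermediate statement; relatedly, the ``vanishing theorem of Lazarsfeld for tensor products of kernel bundles'' you invoke there is not something you can point to, and the $\M^{(h)}$ appearing in Lemma \ref{lem:Koszul} are not tensor powers.

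The paper's proof works instead with the \emph{iterated} kernel bundles $\M^{(0)}=\A$, $\M^{(h)}=\M_{\M^{(h-1)}}\otimes\A$, where at each stage one takes the kernel of evaluation on all of $H^0(X,\M^{(h-1)})$; these govern the linear resolution of $k$ over $R(\A)$ (the Koszul property) rather than the resolution of $R(\A)$ over the polynomial ring. The induction shows each $\M^{(h)}$ is itself $\O_X$-regular, using the exact sequence $0\to\M^{(h)}\to H^0(X,\M^{(h-1)})\otimes\A\to\M^{(h-1)}\otimes\A\to 0$ together with Lemma \ref{lem:ses} and Theorem \ref{thm:Mumford}; the hypothesis $\A\otimes\B_j^{-1}\in\mathcal{B}$ is needed only to peel off a single $\B_j$ at each step, which is why the argument runs for all $h$ without ever requiring the deep twists $\B^{-\u}$ with $|\u|=p$ that your approach would need. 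Your second paragraph (propagation of regularity to $\A^{\ell}$, vanishing of $H^i(X,\A^{\ell})$, and normal generation) is correct and is also used in the paper, but the core of your argument rests on a reduction that cannot be completed.
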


In the proof we will use the following generalization of Mumford's 
theorem to the 
multigraded case. For ease of notation, we fix $B_1, \ldots, B_r$, and 
say a sheaf $\F$ is $L$-regular if it is $L$-regular with 
respect to $B_1, \ldots, B_r$.  

\begin{theorem}[\cite{HeringSchenckSmith}, Theorem 2.1]\label{thm:Mumford}
  Let $\mathcal{F}$ be $\L$-regular.  
  Then for all
  ${\u} \in \mathbb{N}^{r}$,
  \begin{enumerate}
  \item $\mathcal{F}$ is $\(\L \otimes B^{{\u}}\)$-regular;
  \item the natural map 
    \[
    H^{0} \(X, \mathcal{F} \otimes \L \otimes B^{{\u}}\) \otimes
    H^{0}\(X, B^{{\v}}\) \to H^{0}\(X, \mathcal{F} \otimes \L \otimes
    B^{{\u}+{\v}}\)
    \] 
    is surjective for all ${\v} \in \mathbb{N}^{r}$;
  \item 
    $\mathcal{F} \otimes \L \otimes B^{{\u}}$ is generated by its
    global sections, provided there exists ${\w} \in
    \mathbb{N}^{r}$ such that $B^{{\w}}$ is ample.
  \end{enumerate}
\end{theorem}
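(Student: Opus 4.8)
The plan is to bypass any hyperplane-section induction and reduce each of the three assertions to a cohomology-vanishing statement that can be read off directly from the hypothesis, exploiting the fact that the balance $|\u|=i$ in Definition \ref{def:regularity} is exactly matched to the Koszul bookkeeping below. For each $j$, since $B_j$ is globally generated, the evaluation $V_j\otimes\O_X\to B_j$ with $V_j:=H^0(X,B_j)$ is surjective; its kernel $M_j$ is a vector bundle, and the associated tautological Koszul complex
\[
0\to\wedge^{N+1}V_j\otimes B_j^{-N}\to\cdots\to\wedge^2 V_j\otimes B_j^{-1}\to M_j\to 0,\qquad N+1=\dim_k V_j,
\]
is exact, its term in the $p$-th spot ($p\ge 1$) being $\wedge^{p+1}V_j\otimes B_j^{-p}$. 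Because this resolution and the defining sequence $0\to M_j\to V_j\otimes\O_X\to B_j\to 0$ consist of locally free sheaves, they are locally split and hence remain exact after tensoring by an arbitrary coherent sheaf. This is the single structural input I use throughout.

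First I would prove (1). By iteration it suffices to treat $\u=e_j$, the $j$-th standard basis vector, since $(\L\otimes B_j)$-regularity again allows a further twist and a general $\u$ is reached by adding basis vectors one at a time. Tensoring $0\to M_j\to V_j\otimes\O_X\to B_j\to 0$ by $\F\otimes\L\otimes B^{-\u}$ and passing to cohomology, the term $V_j\otimes H^i\(X,\F\otimes\L\otimes B^{-\u}\)$ vanishes by $\L$-regularity (as $|\u|=i$), so it is enough to show $H^{i+1}\(X,\F\otimes\L\otimes B^{-\u}\otimes M_j\)=0$ for $|\u|=i>0$. Breaking the resolution of $M_j$ (tensored by $\F\otimes\L\otimes B^{-\u}$) into short exact sequences, this group is controlled by the groups $H^{i+p}\(X,\wedge^{p+1}V_j\otimes\F\otimes\L\otimes B^{-(\u+p e_j)}\)$ for $p=1,\dots,N$, each of which vanishes because $|\u+p e_j|=i+p$ equals the cohomological degree, which is precisely $\L$-regularity for the multi-index $\u+p e_j$. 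Hence $\F$ is $(\L\otimes B_j)$-regular.

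Next, (2). Setting $\L'=\L\otimes B^{\u}$, so that $\F$ is $\L'$-regular by (1), the claim becomes surjectivity of $H^0\(X,\F\otimes\L'\)\otimes H^0\(X,B^{\v}\)\to H^0\(X,\F\otimes\L'\otimes B^{\v}\)$; factoring $B^{\v}=B_j\otimes B^{\v-e_j}$ and noting that a product $t't_j$ with $t'\in H^0(B^{\v-e_j})$, $t_j\in H^0(B_j)$ lies in $H^0(B^{\v})$, one reduces by induction on $|\v|$ to the single step $\v=e_j$. Tensoring the defining sequence of $M_j$ by $\F\otimes\L'$ identifies that multiplication map with the map $V_j\otimes H^0\(X,\F\otimes\L'\)\to H^0\(X,\F\otimes\L'\otimes B_j\)$ in its long exact sequence, so surjectivity follows once $H^1\(X,\F\otimes\L'\otimes M_j\)=0$; this is the $i=0$ case of the chase above, requiring $H^p\(X,\F\otimes\L'\otimes B_j^{-p}\)=0$ for $p\ge 1$, again immediate from $\L'$-regularity. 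For (3), put $\mathcal{G}=\F\otimes\L\otimes B^{\u}$ and choose $N\gg 0$ with $\mathcal{G}\otimes B^{N\w}$ globally generated (Serre vanishing, $B^{\w}$ being ample), noting $B^{N\w}$ is globally generated as each $B_i$ is. Since $\F$ is $(\L\otimes B^{\u})$-regular, part (2) makes $H^0\(X,\mathcal{G}\)\otimes H^0\(X,B^{N\w}\)\to H^0\(X,\mathcal{G}\otimes B^{N\w}\)$ surjective; the surjections $H^0(\mathcal{G})\otimes H^0(B^{N\w})\otimes\O_X\to H^0(\mathcal{G}\otimes B^{N\w})\otimes\O_X\to\mathcal{G}\otimes B^{N\w}$ then factor through $H^0(\mathcal{G})\otimes B^{N\w}$ via $H^0(B^{N\w})\otimes\O_X\to B^{N\w}$, forcing $H^0(\mathcal{G})\otimes B^{N\w}\to\mathcal{G}\otimes B^{N\w}$ to be surjective, and tensoring by $B^{-N\w}$ yields global generation of $\mathcal{G}$.

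The step I expect to demand the most care is the cohomological chase through the Koszul resolution in (1) and (2): one must verify that the resolution stays exact after tensoring with the possibly non-locally-free sheaf $\F$ (which is why I stress that $M_j$ is a bundle, so the sequences are locally split), and then track the homological degrees so that every Koszul stage lands exactly on a vanishing position $|\u+p e_j|=i+p$. It is precisely the balance $|\u|=i$ of Definition \ref{def:regularity} that makes all these positions vanish simultaneously; by contrast, trying to prove (2) for a general $\v$ in one stroke via the tautological sequence of $B^{\v}$ would demand $H^p\(X,\F\otimes\L'\otimes B^{-p\v}\)=0$, whose index $p|\v|$ no longer matches the degree $p$, which is exactly why the argument must be chained one basis vector at a time. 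Lining up these degrees and twists is the crux; the remainder is formal.
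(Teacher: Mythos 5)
Your proof is correct, but it takes a genuinely different route from the one the paper relies on. Theorem \ref{thm:Mumford} is quoted from \cite{HeringSchenckSmith}, and the proof given there is Mumford's classical slicing argument: induction on the dimension (of the support) of $\F$, using that $k$ is infinite to choose a section of $B_j$ whose divisor $D$ avoids the associated points of $\F$, so that tensoring $0\to B_j^{-1}\to \O_X\to \O_D\to 0$ with $\F$ stays exact, and then transferring regularity between $X$ and $D$ via the restriction sequence; this general-position choice is exactly why the paper bothers to remark that the proof in \cite{HeringSchenckSmith} ``only requires $k$ to be infinite.'' You instead run everything through the tautological Koszul complex of the evaluation map $H^0(X,B_j)\otimes\O_X\to B_j$: since $B_j$ is globally generated, the corresponding section of the bundle $H^0(X,B_j)^{\vee}\otimes B_j$ is nowhere vanishing, so the complex admits local contracting homotopies, is locally split exact, and therefore remains exact after tensoring with the arbitrary coherent sheaf $\F$; regularity then follows by dimension shifting. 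Your bookkeeping checks out: part (1) needs precisely the groups $H^{i+p}(X,\F\otimes\L\otimes B^{-(\u+pe_j)})$ with $|\u+pe_j|=i+p$, and the base case $\v=e_j$ of part (2) needs $H^{p}(X,\F\otimes\L'\otimes B_j^{-p})$ for $p\geq 1$, all of which are vanishing positions of Definition \ref{def:regularity}; your reduction of (2) to $\v=e_j$ by factoring the multiplication one basis vector at a time, and your part (3), coincide with the standard arguments. The trade-off: the slicing proof is lower-tech (only a restriction sequence, no Koszul resolution), but it is the sole place where the blanket infinite-field hypothesis enters; your argument makes no general-position choice at all and hence proves the theorem over an arbitrary field --- which would in fact allow one to drop ``$k$ infinite'' from Theorem \ref{thm:multigraded} altogether, since Lemma \ref{lem:ses} is a formal diagram chase and the paper already notes that Lemma \ref{lem:Koszul} holds over any field.
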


Observe that the proof of Theorem  
\ref{thm:Mumford} in \cite{HeringSchenckSmith}
only requires $k$ to be infinite. 

We will also make ample use of the following lemma. 
\begin{lemma}[\cite{HeringSchenckSmith}, Lemma 2.2]\label{lem:ses}
  Let $0 \to \mathcal{F}' \to \mathcal{F} \to \mathcal{F}'' \to 0$
  be a short exact sequence of coherent $\mathcal{O}_{X}$-modules.  If
  $\mathcal{F}$ is $\L$-regular, $\mathcal{F}''$ is $\(\L \otimes
  B_j^{-1}\)$-regular for all $1 \leq j \leq r$ and $H^{0}\(X,
  \mathcal{F} \otimes \L \otimes B_j^{-1}\) \to H^{0}\(X,
  \mathcal{F}'' \otimes \L \otimes B_j^{-1}\)$ is surjective for
  all $1 \leq j \leq r$, then $\mathcal{F}'$ is also $\L$-regular.
\end{lemma}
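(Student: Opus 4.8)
The plan is to twist the given short exact sequence by the line bundle $\L\otimes\B^{-\u}$ and to read off the desired vanishing from the resulting long exact sequence in cohomology, treating the cohomological degrees $i=1$ and $i\ge 2$ separately. To show that $\F'$ is $\L$-regular I must prove $H^i(\F'\otimes\L\otimes\B^{-\u})=0$ for every $i>0$ and every $\u\in\N^r$ with $|\u|=i$. Since $\L$ and the $\B_j$ are line bundles, tensoring $0\to\F'\to\F\to\F''\to0$ with $\L\otimes\B^{-\u}$ preserves exactness, and the associated long exact sequence contains the segment
\[
H^{i-1}(\F\otimes\L\otimes\B^{-\u}) \to H^{i-1}(\F''\otimes\L\otimes\B^{-\u}) \to H^{i}(\F'\otimes\L\otimes\B^{-\u}) \to H^{i}(\F\otimes\L\otimes\B^{-\u}).
\]
Because $\F$ is $\L$-regular and $|\u|=i$, the rightmost term vanishes, so $H^{i}(\F'\otimes\L\otimes\B^{-\u})$ is the cokernel of the first map; it therefore suffices to show that $H^{i-1}(\F\otimes\L\otimes\B^{-\u}) \to H^{i-1}(\F''\otimes\L\otimes\B^{-\u})$ is surjective.

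For $i\ge 2$ I would in fact show that the target $H^{i-1}(\F''\otimes\L\otimes\B^{-\u})$ is already zero. The key manipulation is to split off one index: since $|\u|=i\ge 2$, some coordinate satisfies $u_j\ge 1$, and writing $\u=e_j+\u'$ with $\u'\in\N^r$ and $|\u'|=i-1$ gives $\L\otimes\B^{-\u}=(\L\otimes\B_j^{-1})\otimes\B^{-\u'}$. As $\F''$ is $(\L\otimes\B_j^{-1})$-regular and $|\u'|=i-1>0$, the definition of regularity yields $H^{i-1}(\F''\otimes(\L\otimes\B_j^{-1})\otimes\B^{-\u'})=0$, which is exactly the term in question. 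Hence the map is vacuously surjective and $H^{i}(\F'\otimes\L\otimes\B^{-\u})=0$ for all $i\ge 2$ with $|\u|=i$.

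The remaining case $i=1$ is precisely where the surjectivity hypothesis is needed, and I expect it to be the only genuinely substantive point: for $i\ge 2$ the higher-cohomology vanishing of $\F''$ carried the argument, whereas at $i=1$ the analogous $H^0$ term of $\F''$ need not vanish. Here $|\u|=1$ forces $\u=e_j$ for a single $j$, so the relevant map is $H^{0}(\F\otimes\L\otimes\B_j^{-1}) \to H^{0}(\F''\otimes\L\otimes\B_j^{-1})$, which is surjective by assumption; its cokernel, and therefore $H^{1}(\F'\otimes\L\otimes\B_j^{-1})$, vanishes. Running this over all $j$ disposes of every $\u$ with $|\u|=1$, and combined with the case $i\ge 2$ this shows $\F'$ is $\L$-regular. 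The whole argument is bookkeeping of twists via the splitting $\u=e_j+\u'$; the single place where regularity of $\F''$ alone does not suffice is $i=1$, and that gap is filled exactly by the hypothesis on global sections, so the three hypotheses are precisely calibrated to what the long exact sequence requires.
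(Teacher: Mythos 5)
Your proof is correct and is essentially the standard argument (the paper itself only cites \cite{HeringSchenckSmith}, Lemma 2.2 without reproducing the proof): twist the sequence, use $\L$-regularity of $\F$ to kill the right-hand term, split off $\u = e_j + \u'$ to invoke $(\L\otimes\B_j^{-1})$-regularity of $\F''$ when $i\geq 2$, and use the $H^0$-surjectivity hypothesis for $i=1$. Nothing is missing.
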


The main tool of the proof is a vanishing theorem for a family 
of vector bundles associated to an ample and globally generated 
line bundle $\A$ as follows. To a globally generated vector 
bundle $\E$ is associated a vector bundle 
$\M_{\E}$, the kernel of the evaluation map 
\begin{equation}\label{eq:ME}
    0\to \M_{\E}\to H^0\left(X,\E\right)\otimes \O_X \to \E\to 0. 
  \end{equation}

For $h\in \N$, we define vector bundles $\M^{\left(h\right)}$ inductively, by 
letting $\M^{\left(0\right)}= \A$ and $\M^{\left(h\right)} = \M_{\M^{\left(h-1\right)}}\otimes \A$, 
provided $\M^{\left(h-1\right)}$ is globally generated.

\begin{lemma}[Lazarsfeld, see {\cite[Lemma 1]{Pareschi93}}]\label{lem:Koszul}
  Let $X$ be a projective variety, 
  let $\A$ be an  ample line bundle on $X$, and let 
$R(\A)=\bigoplus_{\ell\geq 0} H^0\(X,\A^{\ell}\)$ 
 be the 
  section ring associated to $\A$. Assume that the 
  vector bundles $\M^{\(h\)}$ are globally generated for all $h\geq 0$. If 
  $H^1\left(X,\M^{\(h\)}\otimes \A^{\ell}\right)=0$ for all $\ell \geq 0$ 
  then $R(\A)$
  is Koszul. Moreover, if $H^1\(X,\A^{\ell}\)=0$ for all 
  $\ell \geq 1$, the converse also holds. 
\end{lemma}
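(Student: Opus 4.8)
The plan is to build an explicit linear free resolution of $k$ over $R=R(\A)$ out of the bundles $\M^{(h)}$; since Koszulness is \emph{defined} as the existence of such a resolution, this settles the forward implication. The bookkeeping is organized by the graded $R$-modules
\[
  N^{(h)} := \bigoplus_{\ell\geq 0} H^0\(X, \M^{(h)}\otimes\A^\ell\),
\]
where the summand $H^0(X,\M^{(h)}\otimes\A^\ell)$ is placed in degree $\ell+h+1$. The point of this shift is that $N^{(0)}=\bigoplus_{\ell\geq 0}H^0(X,\A^{\ell+1})$ is exactly the irrelevant ideal $R_+$, so the augmentation $0\to N^{(0)}\to R\to k\to 0$ is the bottom of the resolution.

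First I would produce, for each $h\geq 1$, a short exact sequence of graded modules
\[
  0 \to N^{(h)} \to H^0\(X,\M^{(h-1)}\)\otimes_k R(-h) \to N^{(h-1)}\to 0.
\]
Tensoring the evaluation sequence \eqref{eq:ME} for $\E=\M^{(h-1)}$ by $\A^{\ell+1}$ and using $\M^{(h)}=\M_{\M^{(h-1)}}\otimes\A$ gives
\[
  0\to \M^{(h)}\otimes\A^\ell \to H^0\(X,\M^{(h-1)}\)\otimes\A^{\ell+1}\to \M^{(h-1)}\otimes\A^{\ell+1}\to 0,
\]
which is exact as a sequence of sheaves because the $\M^{(h)}$ are globally generated by hypothesis. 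Passing to cohomology, left-exactness identifies the degree-$(\ell+h+1)$ piece of $N^{(h)}$ with the kernel of the multiplication map, while the hypothesis $H^1(X,\M^{(h)}\otimes\A^\ell)=0$ for $\ell\geq 0$ makes that map surjective. The boundary index $\ell=-1$ recovers the defining sequence of $\M_{\M^{(h-1)}}$ itself: the induced map on sections is the identity on $H^0(X,\M^{(h-1)})$, hence surjective with kernel $H^0(X,\M_{\M^{(h-1)}})=0$. Summing over $\ell\geq -1$ therefore turns the middle terms into the free module $H^0(X,\M^{(h-1)})\otimes R(-h)$ and yields the displayed short exact sequence.

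Splicing these short exact sequences, so that each $N^{(h)}$ is at once the image of $F_{h+1}\to F_h$ and the kernel of $F_h\to F_{h-1}$, where $F_h=H^0(X,\M^{(h-1)})\otimes R(-h)$ and $F_0=R$, produces the complex
\[
  \cdots \to H^0\(X,\M^{(1)}\)\otimes R(-2) \to H^0\(X,\M^{(0)}\)\otimes R(-1)\to R\to k\to 0,
\]
which is exact. Its $h$-th free module is generated in degree $h$ and its differential sends these generators into the degree-$h$ part of $F_{h-1}$, so all matrix entries are linear; in particular there are no unit entries, so the resolution is automatically minimal. Hence $k$ has a linear free resolution and $R$ is Koszul.

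For the converse I would reverse the bookkeeping. Assuming $H^1(X,\A^\ell)=0$ for $\ell\geq 1$, the long exact sequence of the twisted evaluation sequence now identifies $H^1(X,\M^{(h)}\otimes\A^\ell)$ with the \emph{cokernel} of the multiplication map $H^0(X,\M^{(h-1)})\otimes R_{\ell+1}\to H^0(X,\M^{(h-1)}\otimes\A^{\ell+1})$, because the obstruction term $H^0(X,\M^{(h-1)})\otimes H^1(X,\A^{\ell+1})$ vanishes. Thus the required vanishing is equivalent to surjectivity of all these maps. On the other hand, Koszulness makes the minimal free resolution of $k$ linear; inducting on $h$ and identifying its $h$-th syzygy module with $N^{(h-1)}$ (starting from $R_+$), linearity says precisely that $N^{(h-1)}$ is generated in its bottom degree $h$, that is, that the multiplication maps are onto, which gives the vanishing. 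The step I expect to be most delicate is the grading bookkeeping that makes the assembled sequences exact as \emph{modules}, especially the handling of the boundary index $\ell=-1$ and the identification $N^{(0)}=R_+$. In the converse the crucial subtlety is the hypothesis $H^1(X,\A^\ell)=0$: without it the identification of $H^1(X,\M^{(h)}\otimes\A^\ell)$ with a cokernel breaks, so Koszulness alone would not force the vanishing; the inductive matching of the minimal syzygy modules with the $N^{(h-1)}$ likewise needs care, as it is what converts the abstract linearity of the resolution into the concrete surjectivity statements.
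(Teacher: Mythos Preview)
The paper does not prove this lemma; it attributes the result to Lazarsfeld, cites \cite[Lemma~1]{Pareschi93}, and only remarks that the argument works over an arbitrary field. Your write-up is essentially the standard proof behind that citation: one introduces the graded $R$-modules $N^{(h)}=\bigoplus_{\ell\ge 0}H^0(X,\M^{(h)}\otimes\A^\ell)$, extracts short exact sequences $0\to N^{(h)}\to H^0(X,\M^{(h-1)})\otimes R(-h)\to N^{(h-1)}\to 0$ from the twisted evaluation sequences using the $H^1$-vanishing, and splices them into a linear (hence minimal) free resolution of $k$; for the converse one runs the same identifications backwards, using $H^1(X,\A^\ell)=0$ to recognize $H^1(X,\M^{(h)}\otimes\A^\ell)$ as the cokernel of the multiplication map and then invoking linearity of the minimal resolution inductively. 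Your handling of the boundary piece $\ell=-1$ and of minimality is correct, and the points you flag as delicate are exactly the ones that need attention. One tacit assumption worth making explicit is $H^0(X,\O_X)=k$, so that $R_0=k$ and the identification $N^{(0)}=R_+$ holds; this is implicit in calling $X$ a variety and in speaking of $R$ as a Koszul $k$-algebra.
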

 
Observe that the proof of this lemma is valid for projective 
varieties over any field. 

\begin{proof}[Proof of Theorem \ref{thm:multigraded}]

  We will use induction on $h$ to show 
 that $\M^{\left(h\right)}$ is $\O_X$-regular. In 
  particular, by Theorem \ref{thm:Mumford}, (3),
  $\M^{\left(h\right)}$ is globally generated, and  $\M^{h+1}$ 
  is defined.

  Tensoring \eqref{eq:ME} for $E=\M^{\left(h-1\right)}$ with $\A$, we obtain the 
  following short exact sequence 
  \begin{equation*}
    0\to \M^{\left(h\right)} \to H^0\left(X,\M^{\left(h-1\right)}\right)\otimes \A 
    \to \M^{\left(h-1\right)}\otimes \A
    \to 0.
  \end{equation*}
  Then 
  $H^0\left(X,\M^{\left(h-1\right)}\right)\otimes \A$ is $\O_X$ regular.
  Since $\A\otimes B^{-e_j} \in \mathcal{B}$, it follows that $\A\otimes B^{-e_j}\cong
  B^{u'}$ for $u'\in \N^r$. 
  By the induction hypothesis and Theorem \ref{thm:Mumford}, (1) 
  $\M^{\left(h-1\right)}$ is $\A \otimes B^{-e_j}$-regular for all $j$, and 
  so $\M^{\left(h-1\right)}\otimes \A$ is $B^{-e_j}$-regular for all $j$.  
  Similarly, by Theorem \ref{thm:Mumford}, (2), the natural map
  $H^0\left(X, \M^{\left(h-1\right)}\right)\otimes H^0\left(X,\A\otimes  B^{-e_j}\right) 
  \to 
  H^0\left(X,\M^{\left(h-1\right)}\otimes \A\otimes B^{-e_j}\right)$ is surjective for all 
  $1\leq j\leq r$. Applying Lemma \ref{lem:ses}, we see that $\M^{\left(h\right)}$
  is $\O_X$-regular. 

  Theorem \ref{thm:Mumford}, (1) implies
  $\M^{h}$ is also $\B^u$-regular for all
  $u\in \N^r$.
  Hence $H^1\left(X,\M^{\left(h\right)}\otimes \A^{\ell}\right) = 0$ for all 
  $h\geq 0$ and $\ell \geq 0$, and 
  Lemma \ref{lem:Koszul} implies that  $R\left(\A\right)$ is 
  Koszul. 
\end{proof}

Theorem \ref{thm:easy} is the special case when 
$r=1$. 

\begin{example} 
The fact that the section ring of high enough powers of ample line bundles 
is Koszul follows easily from this result:
By Serre vanishing, $L^{d}$ is 
$\O_X$-regular with respect to $L$ for $d$ large enough,
hence the associated section ring is Koszul.  
\end{example}

\begin{remark}
Let $R\cong k[x_0, \ldots, x_N]/I$, where $I\subset k[x_0, \ldots, 
x_N]$ is  a homogeneous ideal. 
If $I$ admits a quadratic Gr{\"o}bner basis with respect to 
some monomial ordering, then $R$ is Koszul. 
However, 
a Koszul algebra need not admit a presentation whose ideal 
admits a quadratic Gr{\"o}bner basis, see \cite{EisenbudReevesTotaro}.
\end{remark}
  
\begin{remark} 
It is well known that if the section ring of a line bundle 
$L$ is Koszul, then $L$ satisfies Green's property 
$N_1$
(see \cite[1.8.C]{Lazarsfeld04a} for an introduction to property $N_p$).
On the other hand, 
Sturmfels \cite[Theorem 3.1]{Sturmfels00}
exhibited an example of a smooth projectively normal curve
whose coordinate ring is presented by quadrics but is  not Koszul.
However, in many cases, 
criteria  for line bundles to satisfy $N_p$ 
imply 
that their section ring is Koszul  
and even that its ideal admits 
a quadratic Gr{\"o}bner basis when $p\geq 1$. 
For example,
the conditions for Theorem  \ref{thm:multigraded} agree with those of \cite{HeringSchenckSmith} 
for a line bundle to satisfy $N_1$. 
\end{remark}

Eisenbud, Reeves and 
Totaro \cite{EisenbudReevesTotaro} 
give criteria for 
Veronese subrings of finitely generated graded $k$-algebras 
to be Koszul in terms of algebraic regularity. 
Translating their result into the language of ample line bundles, 
we obtain a better 
bound than Theorem \ref{thm:easy}  for normally generated line bundles. 

\begin{definition}
 An ample line bundle is called \emph{normally generated}, if 
 the natural map 
 \[
 \underbrace{H^0(X,L)\otimes \cdots \otimes H^0(X,L)}_m \to 
 H^0(X,L^m) \]
 is surjective for all $m$.  
\end{definition}

\begin{corollary}\label{cor:ERT}
Let $A$ be a normally generated line bundle. Suppose 
$A^m$ is $\O_X$-regular with respect to itself. Then if 
$d\geq \frac{m}{2}$, the ideal of the section ring associated to $A^d$ 
admits a quadratic Gr{\"o}bner basis; 
in particular, the section ring is Koszul. 
\end{corollary}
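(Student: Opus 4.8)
The plan is to deduce the corollary from the theorem of Eisenbud, Reeves and Totaro, whose statement is purely algebraic, so that the real work lies in translating the sheaf-theoretic hypothesis into the algebraic Castelnuovo--Mumford regularity that their theorem consumes. First I would unpack normal generation: the multiplication maps $\Sym^\ell H^0(X,A)\to H^0(X,A^\ell)$ are surjective, so $R(A)$ is a standard graded quotient $S/I$ of $S=\Sym H^0(X,A)$ by a homogeneous ideal $I$. Since a Veronese of a standard graded algebra is again standard graded, the same holds for every power $A^d$, and the identification $R(A)^{(d)}_\ell=R(A)_{d\ell}=H^0(X,A^{d\ell})=R(A^d)_\ell$ shows that the $d$-th Veronese subalgebra $R(A)^{(d)}$ is precisely the section ring $R(A^d)$. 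Thus it suffices to prove that this Veronese is defined by an ideal with a quadratic Gr\"obner basis.

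Next I would convert the cohomological vanishing in the hypothesis into a bound on $\operatorname{reg}_S R(A)$. Because $A$ is ample one has $\operatorname{Proj} R(A)=X$ with $\widetilde{R(A)}=\O_X$, and since the section ring is its own saturation it satisfies $H^{0}_{\mathfrak m}(R(A))=H^{1}_{\mathfrak m}(R(A))=0$ together with the graded isomorphisms $H^{j}_{\mathfrak m}(R(A))_\ell\cong H^{j-1}(X,A^\ell)$ for $j\geq 2$. Substituting these into the local-cohomology formula $\operatorname{reg}_S R(A)=\max_j\{\,\ell+j : H^{j}_{\mathfrak m}(R(A))_\ell\neq 0\,\}$ turns the hypothesized vanishing directly into an explicit upper bound on $\operatorname{reg}_S R(A)$, and hence, via the shift relating the regularity of a module to that of its defining ideal, into a bound on $\operatorname{reg} I$.

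With this bound in hand I would invoke \cite{EisenbudReevesTotaro}: for a standard graded algebra $S/I$ with $\operatorname{reg} I\leq m$, the defining ideal of the $d$-th Veronese subalgebra admits, in the coordinates given by a basis of its degree-one part, a Gr\"obner basis of quadrics as soon as $d\geq m/2$, and such an algebra is in particular Koszul. Applying this to $R(A)$ and identifying $R(A)^{(d)}=R(A^d)$ from the first step yields the corollary.

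The delicate point is the bookkeeping in the translation step. Three regularities are in play -- the Mumford regularity of $\O_X$ relative to $A$ appearing in Definition \ref{def:regularity}, the regularity of the module $R(A)$, and the regularity of the ideal $I$ -- and they differ only by fixed shifts; one must track these shifts so that the threshold emerging from the local-cohomology computation lines up exactly with the $d\geq m/2$ of \cite{EisenbudReevesTotaro} and with the precise sheaf-cohomological form of the hypothesis. (The cubic plane curve, whose coordinate ring fails to be Koszul, shows that getting this constant off by one would make the statement false, so the normalization must be pinned down rather than estimated.) It is exactly the surjectivity and saturation furnished by normal generation that make the local-cohomology comparison an equality, and hence make the resulting bound sharp.
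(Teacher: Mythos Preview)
Your proposal is correct and follows essentially the same route as the paper: reduce to the Eisenbud--Reeves--Totaro theorem by using normal generation to write $R(A)=S/I$, identify $R(A^d)$ with the $d$-th Veronese, and translate the sheaf-regularity hypothesis into the bound $\operatorname{reg}_S R(A)\le m-1$. The only cosmetic difference is that you spell out the translation via local cohomology and the comparison $H^{j}_{\mathfrak m}(R(A))_\ell\cong H^{j-1}(X,A^\ell)$, whereas the paper simply cites the standard equivalence (Eisenbud, Exercise~20.20; Lazarsfeld~1.8.26) between Mumford regularity of $\iota_*\O_X$ on $\mathbb{P}(H^0(X,A))$ and algebraic regularity of $R(A)$; your cautionary remark about tracking the shifts is exactly the content of that citation.
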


To see how this theorem follows from the criteria in \cite{EisenbudReevesTotaro}, we first review the notion of algebraic regularity. 

\begin{definition}\label{def:algregularity}
Let $S=k[x_0, \ldots, x_N]$ be a  polynomial 
ring over $k$. 
A finitely generated graded $S$-module $M$ is 
$m$-regular 
if $\Tor^S_i(M,k)_j=0$ for $j>i+m$ and $i\geq 0$.  
\end{definition}

Let $I\subset S$ be a homogeneous ideal, and let  $R=S/I$.   
We denote with $R^{(d)} = \bigoplus _{m\in \N} R_{md}$ the 
$d$'th Veronese subalgebra of $R$.
Keeping in mind that $S/I$ is $m$-regular if and 
only if $I$ is $(m+1)$-regular, 
the following theorem is an easy  
consequence of the results proved in 
Eisenbud, Reeves and Totaro \cite{EisenbudReevesTotaro}.

\begin{theorem}[\cite{EisenbudReevesTotaro}]\label{thm:ERT}
If  $R$ is $(m-1)$-regular and 
$d\geq \frac{m}{2}$, then the ideal of $R^{(d)}$ 
admits a  quadratic Gr{\"o}bner basis.  
\end{theorem}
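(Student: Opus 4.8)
The plan is to deduce the statement from three ingredients: the translation between the regularity of $R$ and of its defining ideal, a generic initial ideal reduction, and the combinatorial core controlling the Veronese of a monomial ideal generated in low degree. First I would rephrase the hypothesis. By the equivalence ``$S/I$ is $m$-regular if and only if $I$ is $(m+1)$-regular'' recalled above, the assumption that $R=S/I$ is $(m-1)$-regular means precisely that $I$ is $m$-regular in the sense of Definition \ref{def:algregularity}, i.e. $\mathrm{reg}(I)\leq m$. Since $k$ is infinite I may then pass to the generic initial ideal $\mathrm{gin}(I)$ with respect to the reverse lexicographic order after a generic change of coordinates; this preserves regularity, so $\mathrm{reg}(\mathrm{gin}(I))\leq m$ as well. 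Because $\mathrm{gin}(I)$ is Borel-fixed and the regularity of a Borel-fixed monomial ideal equals the top degree of its minimal generators, $\mathrm{gin}(I)$ is generated in degrees $\leq m$. The hypothesis $d\geq \frac{m}{2}$ enters exactly here, as the inequality $m\leq 2d$.

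Next I would set up the Veronese presentation using standard monomials. After the generic change of coordinates the degree-$d$ monomials $x^\alpha\notin \mathrm{gin}(I)$ form a $k$-basis of $R_d$, so I present $R^{(d)}=T/J$ where $T=k[y_\alpha : |\alpha|=d,\ x^\alpha\notin \mathrm{gin}(I)]$, with $y_\alpha$ mapping to the class of $x^\alpha$ and $J$ the kernel. I would equip $T$ with a sorting term order in the sense of Sturmfels, compatible with the reverse lexicographic structure used for $\mathrm{gin}(I)$, so that $\mathrm{in}(J)$ is governed by $\mathrm{gin}(I)$ together with the relations of the $d$-uple embedding. Two families of quadratic leading terms then appear. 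The first come from the binomial relations $y_\alpha y_\beta - y_{\alpha'}y_{\beta'}$ with $x^{\alpha+\beta}=x^{\alpha'+\beta'}$; their leading monomials generate the initial ideal of the toric ideal of $S^{(d)}$, which classically has a quadratic Gr\"obner basis. The second come from $\mathrm{gin}(I)$: a minimal generator $x^\gamma$ of degree $e\leq m\leq 2d$ is covered by a product $x^{\alpha+\beta}$ of two degree-$d$ monomials (possible because $2d\geq e$), so $x^{\alpha+\beta}\in \mathrm{gin}(I)$ and the monomial $y_\alpha y_\beta$ appears in $\mathrm{in}(J)$. Both families consist of quadrics, and choosing standard monomials as variables is what removes any linear contributions.

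The main obstacle is to verify that these quadrics actually constitute a Gr\"obner basis of $J$, i.e. that $\mathrm{in}(J)$ is generated in degree two and that no relation of higher degree is forced. Concretely I would show that every degree-$2d$ monomial of $S$ lying in $\mathrm{gin}(I)$ is already a multiple of one of the quadratic leading terms above, and that the overlaps (S-polynomials) between the sorting relations and the monomial relations reduce to zero. This is precisely where $m\leq 2d$ is used in full force: it guarantees that no minimal generator of $\mathrm{gin}(I)$ ``spreads'' across three or more degree-$d$ blocks, which would otherwise produce a cubic initial generator. Once $\mathrm{in}(J)$ is identified as generated by quadrics, the quadratic Gr\"obner basis for $J$ follows, and in particular $R^{(d)}$ is Koszul. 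Since the generic change of coordinates and the Borel-fixed property behave well over any infinite field---working with Borel-fixedness rather than strong stability in positive characteristic---the argument respects the standing assumption that $k$ is infinite.
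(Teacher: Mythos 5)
Your opening reductions are essentially sound, and they match the only content the paper itself supplies (the paper does not prove this theorem; it cites it as a consequence of Eisenbud--Reeves--Totaro, adding only the translation between the regularity of $R=S/I$ and of $I$). Indeed, $(m-1)$-regularity of $R$ gives $\mathrm{reg}(I)\leq m$, Bayer--Stillman gives $\mathrm{reg}(\mathrm{gin}(I))=\mathrm{reg}(I)$ over any infinite field, and hence $\mathrm{gin}(I)$ is generated in degrees $\leq m\leq 2d$. One caveat: your justification via ``the regularity of a Borel-fixed ideal equals the top degree of its minimal generators'' is false in positive characteristic (in characteristic $p$, the ideal $(x^p,y^p)$ is Borel-fixed but has regularity $2p-1$); this does not hurt you, because you only need the trivial direction that generator degrees of any homogeneous ideal are bounded by its regularity.

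The genuine gap is in the core Gr{\"o}bner basis claim, and it is not merely left unverified --- it is false with the sorting order you propose. Take $S=k[x,y,z]$ and $I=\mathrm{gin}(I)=(x^2y^2)$, so $m=\mathrm{reg}(I)=4$ and $d=2\geq m/2$. All degree-$2$ monomials are standard, so $T$ has the six variables $y_{x^2},\dots,y_{z^2}$. The cubic monomial $y_{xy}y_{xz}y_{yz}$ lies in $J$, since its image $x^2y^2z^2$ lies in $I$; being a monomial of $J$, it lies in $\mathrm{in}(J)$ for \emph{every} term order. But none of its quadratic divisors is among your proposed leading terms: the pairs $(xy,xz)$, $(xy,yz)$, $(xz,yz)$ are all sorted (the sorted decomposition of $x^2y^2z^2$ is exactly $(xy,xz,yz)$), so for a sorting order they are standard rather than leading; and the pair products $x^2yz$, $xy^2z$, $xyz^2$ do not lie in $(x^2y^2)$, so none of them yields a type-2 quadric. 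The only type-2 quadrics are $y_{xy}^2$ and $y_{x^2}y_{y^2}$, and neither divides the cubic. Hence $\mathrm{in}(J)$ has a cubic minimal generator for any sorting-compatible order, and your candidate set of quadrics cannot be a Gr{\"o}bner basis. Your heuristic that $m\leq 2d$ prevents a generator of $\mathrm{gin}(I)$ from ``spreading across three or more blocks'' is precisely what fails: $x^2y^2$ spreads across all three blocks of the sorted triple $(xy,xz,yz)$. This is why the Eisenbud--Reeves--Totaro argument is not a routine S-polynomial check with the sorting structure: one must choose a different (lexicographic-type) order on the Veronese variables --- for instance one under which $y_{xy}y_{yz}$ beats $y_{xz}y_{y^2}$, so that sorted pairs like $y_{xy}y_{yz}$ and $y_{xz}y_{yz}$ become leading terms and cover the cubic above --- and the verification that the resulting initial ideal is quadratic is the technical heart of their paper, which your proposal does not supply.
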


\begin{proof}[Proof of Corollary \ref{cor:ERT}]
Since $A$ is normally generated, it is very ample. 
Moreover, the section ring $R$ 
associated to $A$ is generated in degree $1$, and it agrees with the 
homogeneous coordinate ring of the embedding 
$\iota\colon X\hookrightarrow \P:= \P\left(H^0\left(X,A\right)\right)$ 
induced by $A$. 
In particular, $R$ is 
of the form $S/I$ for $S=\Sym^{\bullet}H^0(X,A)$ and $I$ a homogeneous
ideal in $S$. 
Now 
$A^m$ is $\O_X$-regular  with respect to itself 
if and only if $\iota_*A$ is $\O_{\P}(m-1)$-regular 
 with respect to 
$\O_{\P}(1)$ 
if and only if  
$R(\iota_*A)=R(A)$ is $(m-1)$-regular as a $S$-module, 
 (see 
for example \cite[Exercise 20.20.]{Eisenbud95} or 
\cite[1.8.26.]{Lazarsfeld04a}). Since the  
section ring associated to $A^d$ agrees with $R^{(d)}$, the corollary 
follows from  Theorem \ref{thm:ERT}. 
\end{proof}
\section{Polytopal semigroup rings}\label{sec:polytopes} The question which powers of ample line bundles on toric 
varieties have Koszul section ring was studied 
by Bruns, Gubeladze and Trung in \cite{BrunsGubeladzeTrung97}. 
They prove that for 
an ample line bundle $A$  on a toric variety $X$ of dimension $n$,
the section ring $R(A^n)$ is Koszul. 
Observe that this also follows easily 
from Theorem \ref{thm:easy}. In fact, 
since the higher cohomology of an ample line bundle 
on a toric variety vanishes, $A^n$ is $\O_X$-regular. 

A more careful study of the regularity of a line bundle 
on a toric variety shows that if $r$ is the number of integer 
roots of the Hilbert polynomial of $A$, then  
$A^{n-r}$ is $\O_X$-regular (see \cite[Lemma 4.1]{HeringSchenckSmith}),
and we obtain the following Corollary. 

\begin{corollary}
Let $A$ be an ample line bunlde on a toric variety $X$, and let 
$r$ be the number of integer roots of the Hilbert polynomial 
of $A$. 
Then $R(A^{n-r})$ is Koszul. 
\end{corollary}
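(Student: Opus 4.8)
The plan is to deduce the corollary from Theorem~\ref{thm:easy}, applied to the ample line bundle $\A$ with $m=n-r$. Since we work with a single line bundle, the hypothesis of Theorem~\ref{thm:easy} is exactly that $\A^{n-r}$ be $\O_X$-regular with respect to $\A$, i.e.\ that $H^i(X,\A^{n-r-i})=0$ for all $i>0$; this is precisely the regularity statement recalled just above from \cite[Lemma~4.1]{HeringSchenckSmith}. Granting that input the corollary is formally immediate, but I will sketch why the regularity holds, since that is where all the content lies. (The base field is infinite by the standing assumption, so Theorem~\ref{thm:easy} applies.)

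First I would record the two toric cohomology facts I intend to use. For the nonnegative powers, $\A^{j}$ with $j\geq 0$ is nef, so Demazure vanishing gives $H^i(X,\A^{j})=0$ for all $i>0$. For the negative powers, the combinatorial computation of sheaf cohomology on a complete toric variety shows that the cohomology of $\A^{-s}$ with $s\geq 1$ is concentrated in top degree: $H^i(X,\A^{-s})=0$ for $i<n$, while $\dim H^n(X,\A^{-s})$ equals the number of lattice points in the interior of $sP$, where $P$ denotes the lattice polytope of $\A$. This last statement is Serre-dual to Demazure vanishing and requires no smoothness hypothesis.

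With these in hand the regularity check splits into ranges for $i\in\{1,\dots,n\}$. When $i\leq n-r$ the exponent $n-r-i$ is nonnegative, so the first fact gives $H^i(X,\A^{n-r-i})=0$. When $n-r<i<n$ the exponent is $-s$ with $1\leq s<r$, and since $i<n$ the second fact forces $H^i(X,\A^{-s})=0$. The only surviving case is $i=n$, where I must show $H^n(X,\A^{-r})=0$. By the dimension count above this is equivalent to $rP$ containing no interior lattice point, and by Ehrhart--Macdonald reciprocity the Hilbert polynomial $\chi(X,\A^{t})$ evaluated at $t=-r$ equals $(-1)^n$ times that interior count. Since this polynomial is positive on nonnegative integers (there it counts lattice points in $tP$), its integer roots are exactly the negative integers $-s$ for which $sP$ has no interior lattice point; a standard monotonicity argument (adding a fixed lattice point of $P$ sends interior points of $sP$ into interior points of $(s+1)P$) shows these form an initial block $-1,\dots,-r$. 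In particular $-r$ is a root, so $rP$ has no interior lattice point and $H^n(X,\A^{-r})=0$. This establishes $\O_X$-regularity of $\A^{n-r}$, and Theorem~\ref{thm:easy} then yields that $R(\A^{n-r})$ is Koszul.

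The one genuinely nontrivial ingredient is the concentration of $H^{\bullet}(X,\A^{-s})$ in top degree together with the identification of $H^n$ with interior lattice points, and the reciprocity bookkeeping matching the number of integer roots to the vanishing threshold. This is exactly the content of \cite[Lemma~4.1]{HeringSchenckSmith}, so in the final write-up I would simply cite it to get $\O_X$-regularity of $\A^{n-r}$ and then invoke Theorem~\ref{thm:easy}.
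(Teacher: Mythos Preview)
Your proposal is correct and follows exactly the paper's approach: cite \cite[Lemma~4.1]{HeringSchenckSmith} to obtain that $\A^{n-r}$ is $\O_X$-regular with respect to $\A$, then apply Theorem~\ref{thm:easy}. The additional sketch you provide of the regularity argument (Demazure vanishing for nonnegative powers, top-degree concentration for negative powers, and the Ehrhart--Macdonald reciprocity bookkeeping) is accurate and merely unpacks the content of the cited lemma, which the paper is content to invoke as a black box.
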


In terms of lattice polytopes and polytopal semigroup rings 
this can be rephrased as follows. Let $M\cong \Z^n$ be a lattice, 
and $P\subset M\otimes \R:=M_{\R}$ be a lattice polytope.  
$P$ determines a semigroup $S_P \subset M \times \Z$, the 
semigroup generated by $\{(p,1)\in M\times \Z \mid p\in P\cap M\}$. 
Let $k[S_P]$ be the semigroup algebra associated to $S_P$. 

\begin{corollary}
 Let $P$ be a lattice polytope of 
dimension $n$, and let $r$ be the largest positive integer such that 
$rP$ does not contain any interior lattice points. Then the  
polytopal semigroup ring $k[S_{(n-r)P}]$ is Koszul. 
\end{corollary}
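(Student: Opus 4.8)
The plan is to deduce this from the preceding Corollary on toric varieties, translating the combinatorial hypothesis on interior lattice points into a statement about integer roots of a Hilbert polynomial. To the lattice polytope $P$ I associate the projective toric variety $X$ of dimension $n$ together with the ample line bundle $A$ whose polytope is $P$, so that for every $\ell\geq 0$ the space $H^0(X,A^{\ell})$ has the characters $\chi^{q}$, $q\in \ell P\cap M$, as a $k$-basis. Under this dictionary the section ring $R(A^{n-r})$ is the full lattice-point algebra $\bigoplus_{\ell\geq 0}\bigoplus_{q\in \ell(n-r)P\cap M}k\,\chi^{q}$, and the polytopal semigroup ring $k[S_{(n-r)P}]$ is exactly the subalgebra it generates in degree $1$. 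The two things I must check are, first, that the number of integer roots of the Hilbert polynomial of $A$ equals $r$, so that the preceding Corollary applies and shows $R(A^{n-r})$ is Koszul, and second, that this Koszul section ring actually coincides with $k[S_{(n-r)P}]$.

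For the first point I would identify the Hilbert polynomial $\ell\mapsto\chi(X,A^{\ell})$ with the Ehrhart polynomial $E_P$ of $P$: since $A$ is ample on a toric variety its higher cohomology vanishes, so $E_P(\ell)=\chi(X,A^{\ell})=h^0(X,A^{\ell})=\#(\ell P\cap M)$ for $\ell\geq 0$, a polynomial of degree $n$ (hence $r\leq n$ and $n-r\geq 0$). By Ehrhart--Macdonald reciprocity, $E_P(-\ell)=(-1)^{n}\,\#(\Int(\ell P)\cap M)$ for $\ell\geq 1$, so the negative integer $-\ell$ is a root of $E_P$ precisely when $\ell P$ has no interior lattice point; no nonnegative integer is a root, since $E_P(\ell)\geq 1$ for $\ell\geq 1$ and $E_P(0)=1$. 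It then remains to see that the values $\ell\geq 1$ for which $\ell P$ is free of interior lattice points form the initial segment $\{1,\ldots,r\}$, so that $E_P$ has exactly $r$ integer roots $-1,\ldots,-r$. This follows from the monotonicity inclusion $\Int(\ell P)+P\subseteq\Int((\ell+1)P)$: translating any interior lattice point of $\ell P$ by a lattice vertex of $P$ yields an interior lattice point of $(\ell+1)P$, so once a dilate acquires an interior lattice point every larger dilate keeps one.

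Having matched the root count with $r$, the preceding Corollary gives that $R(A^{n-r})$ is Koszul. To finish, I invoke the fact recorded in the introduction that a Koszul algebra is generated in degree $1$. The degree-$1$ part of $R(A^{n-r})$ is $H^0(X,A^{n-r})$, with basis $(n-r)P\cap M$, and the subalgebra it generates is by definition $k[S_{(n-r)P}]$; generation in degree $1$ forces this subalgebra to be all of $R(A^{n-r})$. Hence $k[S_{(n-r)P}]=R(A^{n-r})$ is Koszul. In particular the Koszul property delivers the \emph{normality} (integral closedness) of $(n-r)P$ for free, so I never have to establish it separately.

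I expect the combinatorial step---showing that the integer roots of $E_P$ are exactly $-1,\ldots,-r$---to be the main obstacle. The sign computation through reciprocity is routine, but the point that these roots form an unbroken initial segment, rather than merely numbering $r$, is what makes ``number of integer roots'' agree with the combinatorially defined $r$, and it is precisely the monotonicity of interior lattice points under dilation that guarantees this.
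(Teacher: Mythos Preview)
Your argument is correct and follows the same route as the paper: reduce to the preceding corollary by identifying the combinatorial $r$ with the number of integer roots of the Hilbert (Ehrhart) polynomial, which the paper simply cites from \cite[Section~4]{HeringSchenckSmith}. You additionally spell out the Ehrhart--Macdonald reciprocity and monotonicity steps, and make explicit the identification $k[S_{(n-r)P}]=R(A^{n-r})$ via generation in degree~$1$, which the paper leaves implicit.
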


This follows from the fact that $r$ is
the number of integer roots of the Hilbert polynomial of the ample 
line bundle associated to $P$ (see for example \cite[Section 4]{HeringSchenckSmith}). 

\bibliography{/Users/mhering/Documents/mybib}
\end{document}